\DeclareMathOperator{\mcg}{MCG}
\DeclareMathOperator{\pmcg}{PMCG}
\DeclareMathOperator{\Torelli}{\mathcal{I}}
\DeclareMathOperator{\Ends}{Ends}
\newcommand{\ccmcg}[1]{\overline{\mcg_c(#1)}}
\newcommand{\C}{\mathcal{C}}
\newcommand{\R}{\mathbb{R}}
\newcommand{\N}{\mathbb{N}}
\newcommand{\Y}{\mathcal{Y}}
\newcommand{\st}{\;|\;}
\newcommand{\ssm}{\smallsetminus}
\newtheorem{thm}{Theorem}
\newtheorem{lemma}[thm]{Lemma}
\newtheorem{prop}[thm]{Proposition}
\newtheorem{thmintro}{Theorem}
\newtheorem{qst}[thmintro]{Question}
\theoremstyle{definition}
\title{Multitwists in big mapping class groups}
\author{George Domat}
\email{george.domat@rice.edu}
\address{Rice University, Department of Mathematics, 6100 Main St.--MS 136, Houston, TX 77005, USA}
\author{Federica Fanoni}
\email{federica.fanoni@u-pec.fr}
\address{CNRS, Univ Paris Est Creteil, Univ Gustave Eiffel, LAMAR8050, F-94010 Creteil, France}
\author{Sebastian Hensel}
\email{hensel@math.lmu.de}
\address{Mathematisches Institut der Universit\"at M\"unchen, Theresienstr. 39, 80333 M\"unchen, Germany}
\date{\today}
\begin{document}
\begin{abstract}
We show that the group generated by multitwists (i.e.\ products of powers of twists about disjoint non-accumulating curves) doesn't contain the Torelli group of an infinite-type surface. As a consequence, multitwists don't generate the closure of the compactly supported mapping class group of a surface of infinite type.
\end{abstract} 
\maketitle
\section{Introduction}
The mapping class group of a surface of finite type has been thoroughly studied for decades. In particular, multiple \emph{simple} sets of generators are known. The Dehn--Lickorish theorem (\cite{dehn_gruppe}, \cite{lickorish_finite}), in combination with the Birman exact sequence (\cite{birman_mapping}), shows that the pure mapping class group of a finite-type surface can be generated by finitely many Dehn twists about nonseparating curves, and we need to add finitely many half-twists to generate the full mapping class group. Humphries \cite{humphries_generators} proved that, if the surface is closed and of genus \(g\geq 2\), \(2g+1\) Dehn twists about nonseparating curves suffice to generate the mapping class group, and moreover this number is optimal: fewer than \(2g+1\) Dehn twists cannot generate. Other results show that mapping class groups can be generated by two elements (see e.g.\ \cite{wajnryb_mapping}), by finitely many involutions or by finitely many torsion elements (see e.g.\ \cite{bf_every}).

In the case of surfaces of infinite type, the (pure) mapping class group is uncountable, so in particular it is not finitely (nor countably) generated. For some of these surfaces the mapping class group is generated by torsion elements, or even by involutions (see \cite{mt_self-similar} and \cite{cc_normal}), while for other surfaces they aren't (see \cite{domat_big}, \cite{mt_self-similar}, \cite{cc_normal}). To the best of our knowledge, no other generating set is known.

Note that the (pure) mapping class group of a surface of infinite type is endowed with an interesting topology, induced by the compact-open topology on the group of homeomorphisms of the surface. So it is interesting to talk about \emph{topological} generating sets (sets whose \emph{closure} of the group they generate is the (pure) mapping class group). It follows from the finite-type results that Dehn twists topologically generate the closure of the compactly supported mapping class group. Moreover, Patel and Vlamis \cite{pv_algebraic} proved that the pure mapping class group of a surface is topologically generated by Dehn twists if the surface has at most one nonplanar end, and by Dehn twists and maps called \emph{handle shifts} otherwise.

The goal of this note is to investigate a natural candidate for a set of generators of the closure of the compactly supported mapping class group of a surface: the collection of \emph{multitwists}. A multitwist is a (possibly infinite) product of powers of Dehn twists about a collection of simple closed curves that do not accumulate anywhere in the surface. Our main result is a negative one, and it follows from a non-generation result for the Torelli group:

\begin{thmintro}\label{thm:main}
Let \(S\) be an infinite-type surface. Then the subgroup of the mapping class group of $S$ generated by multitwists doesn't contain the Torelli group. In particular, multitwists don't generate the closure of the compactly supported mapping class group.
\end{thmintro}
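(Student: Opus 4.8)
The plan is to reduce the theorem to producing a single element of $\Torelli(S)$ that is not a finite product of multitwists, and then to build such an element so that its ``complexity'' escapes to an end of $S$ at a rate that no finite product of multitwists can match. First I would record the following structural fact. A finite union of non-accumulating (locally finite) families of disjoint curves is again locally finite, so every $g$ in the group $G$ generated by multitwists is a \emph{finite} product $g=m_1\cdots m_k$, and its support is contained in a single locally finite multicurve $\mathcal C=\bigcup_{i=1}^k\mathcal C_i$, where $\mathcal C_i$ is the support of $m_i$. In particular $g$ is the identity outside a locally finite union of annuli, and on each compact subsurface $K$ it is, up to boundary twists, a product of at most $k$ finite multitwists about the finitely many components of $\mathcal C$ meeting $K$.

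Next I would fix an end $p$ of $S$ and choose pairwise disjoint finite-type subsurfaces $W_1,W_2,\dots\subset S$ that eventually leave every compact set (so they converge to $p$), each carrying enough genus to support a nontrivial Torelli group. Choosing $f_j\in\Torelli(W_j)$ and extending by the identity, I set $f=\prod_j f_j$. Since the $W_j$ are disjoint and locally finite, this product converges in the compact--open topology to a well-defined mapping class, and as each $f_j$ acts trivially on $H_1$ with disjoint support, $f\in\Torelli(S)$. The argument is won in the choice of the $f_j$: I would take them of rapidly growing \emph{multitwist complexity}, i.e.\ so that $f_j$ cannot be written as a product of fewer than $N_j$ multitwists of $W_j$, with $N_j\to\infty$.

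The heart of the proof is then a lower bound ruling out $f=m_1\cdots m_k$. Granting the structural fact, restricting to $W_j$ would express $f_j$ (up to boundary twists) as a product of at most $k$ multitwists of $W_j$; since the support of any single multitwist is a \emph{disjoint} multicurve, this contradicts $N_j>k$ for large $j$. To make ``multitwist complexity is unbounded'' precise, the natural tool is the Johnson homomorphism: the image of a single Torelli multitwist has the special reducible form $\sum_r v_r\wedge\omega_r$ in which the classes $v_r$ are represented by the disjoint support curves and hence span an \emph{isotropic} subspace of $H_1(W_j)$. I would choose $f_j$ whose Johnson image cannot be written as a bounded number of such isotropic pieces, for instance a class assembled from many pairwise-linked genus generators of $W_j$, and deduce the desired lower bound from a linear-algebra estimate in $\wedge^3 H_1(W_j)$.

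The hard part is exactly this lower bound, and it has three delicate points. First, the factors $m_i$ need not individually lie in the Torelli group --- only $f$ does --- so the additive Johnson homomorphism must be replaced by its crossed-homomorphism (Morita) extension to all of $\mcg(S)$, and one must verify that the isotropic/reducible structure of the image of a single, possibly non-Torelli, multitwist survives. Second, ``restricting a global multitwist to $W_j$'' requires care, since support curves may cross $\partial W_j$ and contribute boundary and partial twists; one must show these alter the relevant invariant only in a controlled way. Third, one must genuinely exhibit finite-type Torelli elements $f_j$ with the required growing, symplectically generic image and verify non-realizability by few isotropic pieces. Once these are in place we obtain $f\in\Torelli(S)\smallsetminus G$, proving the first assertion; the ``in particular'' then follows because multitwists and the whole Torelli group lie in $\overline{\mcg_c(S)}$, so $G$, failing to contain $\Torelli(S)$, cannot equal that closure.
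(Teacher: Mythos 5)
Your overall skeleton is the right one, and it matches the paper's: build $F=\prod_j f_j$ from Torelli elements of rapidly growing ``complexity'' supported on disjoint, non-accumulating finite-type subsurfaces, and show that a product of $k$ multitwists has complexity bounded in terms of $k$ on each such subsurface. The reduction of the ``in particular'' clause is also correct. But the obstruction you propose to measure complexity does not work, and the step where your argument breaks is exactly the step the paper's machinery exists to handle. Your ``structural fact'' --- that $g=m_1\cdots m_k$ restricts to $W_j$, up to boundary twists, as a product of at most $k$ multitwists of $W_j$ --- is false as stated. A single multitwist whose support curves cross $\partial W_j$ essentially does not preserve $W_j$ and has no well-defined restriction to it as a mapping class of $W_j$; and in a product, the later factors act on the \emph{images} of $W_j$ and of the earlier multicurves, so the curves one would twist about inside $W_j$ are images under earlier factors and need not form a multicurve at all. (Relatedly, the union $\bigcup_i\mathcal C_i$ is not a multicurve and its neighborhood is not a union of annuli, since curves from different factors intersect.) The paper circumvents precisely this by never restricting: it uses $\ccmcg{S}$-nondisplaceable subsurfaces, subsurface projections, and the Bestvina--Bromberg--Fujiwara projection complex to produce homogeneous quasimorphisms $\varphi_n$ of uniform defect $\Delta$ that are bounded by $\Delta$ on \emph{every} multitwist (Lemma \ref{lem:finitemtwists} and Proposition \ref{prop:quasimorphism}); then $|\varphi_n(m_1\cdots m_k)|\le 2k\Delta$ is automatic from the quasimorphism property, with no restriction step needed.

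Two further gaps are independent of the above. First, your obstruction requires genus: for an infinite-type surface of genus zero (e.g.\ the flute or Cantor tree surface) the intersection form on $H_1$ vanishes, there are no ``pairwise-linked genus generators,'' and the Johnson homomorphism in the form you invoke is unavailable, yet the theorem must hold for all infinite-type surfaces. (The paper's Lemma \ref{lem:nondisp}, Case 3, supplies $n$-holed spheres, and the complexity there is detected by scl/quasimorphisms, not by homology.) Second, even where the Johnson homomorphism exists, your lower bound is only asserted: the factors $m_i$ need not lie in Torelli, so additivity fails and Morita's crossed homomorphism introduces symplectic twisting of each term; and the claim that some class in $\wedge^3 H_1(W_j)$ cannot be written as a sum of $k$ twisted ``isotropic'' pieces is a nontrivial rank estimate you have not proved. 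You flag all three difficulties yourself, but they are not technicalities --- the first one is the entire content of the theorem. You would also need a separate argument for the Loch Ness monster (the paper punctures it and uses the Birman exact sequence), since it admits no nondisplaceable configuration of the required kind.
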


The idea of the proof is to produce an explicit element in the Torelli group that is not in the subgroup generated by multitwists. This element is built by taking an infinite product of increasing powers of partial pseudo-Anosov homeomorphisms supported on disjoint finite-type subsurfaces. We use work of Bestvina, Bromberg and Fujiwara \cite{bbf_stable} to certify that the mapping class we construct is not in the subgroup generated by multitwists.

Theorem \ref{thm:main} also begs the following question: 

\begin{qst}
	What is the subgroup generated by the collection of multitwists? Is there an alternative, more explicit description of its elements?
\end{qst}

Furthermore, our theorem shows that the subgroup generated by the collection of multitwists is not a closed subgroup of the mapping class group. Therefore, it does not immediately inherit a Polish topology from the topology on the mapping class group.

\begin{qst}
	Is the subgroup generated by the collection of multitwists a Polish group?
\end{qst}

\subsection*{Acknowledgements}
The authors would like to thank Mladen Bestvina for his suggestion of how to remove an unnecessary assumption in the main theorem. They are also grateful to the organizers of the \emph{Big Mapping Class Groups and Diffeomorphism Groups} conference, during which most of the work was done. They thank the referees for their useful comments. The second author thanks Peter Feller for useful conversations.

The first author was supported in part by the Fields Institute for Research in Mathematical Sciences and NSF RTG--1745670.

\section{Preliminaries}
In this note, a surface is a connected, orientable, Hausdorff, second countable two-dimensional manifold, without boundary unless otherwise stated. One notable exception is any subsurface, which will always have compact boundary. Boundary components of subsurfaces are assumed to be homotopically nontrivial, but are allowed to be homotopic to a puncture.

Surfaces are \emph{of finite type} if their fundamental groups are finitely generated and \emph{of infinite type} otherwise. A surface \(S\) is \emph{exceptional} if it has genus zero and at most four punctures or genus one and at most one puncture, otherwise it is \emph{nonexceptional}.

The \emph{mapping class group} of a surface \(S\) is the group \(\mcg(S)\) of orientation preserving homeomorphisms of \(S\) up to homotopy. The \emph{pure mapping class group} \(\pmcg(S)\) is the subgroup of \(\mcg(S)\) fixing all ends and ---if there are any --- boundary components, and \(\ccmcg{S}\) denotes the closure of the subgroup generated by compactly supported mapping classes. The \emph{Torelli group} $\Torelli(S)$ is the subgroup of the mapping class group given by elements acting trivially on the first homology group of the surface.

A pseudo-Anosov mapping class \(f\) of a finite-type surface is \emph{chiral} if \(f^k\) is not conjugate to \(f^{-k}\) for every \(k\neq 0\). Note that it follows from \cite{bbf_stable} that every pseudo-Anosov element in the Torelli group is chiral: indeed, by \cite[Theorem 5.6]{bbf_stable}, every nontrivial element in the Torelli group has positive stable commutator length, and by \cite[Theorem 4.2]{bbf_stable} any pseudo-Anosov element with positive stable commutator length is chiral.

A \emph{curve} on a surface is the homotopy class of an essential (i.e.\ not homotopic to a point, a puncture or a boundary component) simple closed curve.
Given a curve \(\alpha\), we denote by \(\tau_\alpha\) the Dehn twist about \(\alpha\).

An \emph{integral weighted multicurve} \(\mu\) is a formal sum \(\sum_{i\in I}n_i\alpha_i\), where the \(\alpha_i\) are pairwise disjoint curves not accumulating anywhere and the \(n_i\) are integers. Given an integral weighted multicurve \(\mu\), we define \(\tau_\mu\) to be the mapping class
\[\tau_\mu=\prod_{i\in I}\tau_{\alpha_i}^{n_i}.\]
Such a mapping class is called a \emph{multitwist}.

We say that an integral weighted multicurve is \emph{finite} if \(I\) is finite (i.e.\ it contains finitely many curves). An integral weighted multicurve \(\nu\) is a \emph{submulticurve} of an integral weighted multicurve \(\mu=\sum_{i\in I}n_i\alpha_i\) if \(\nu=\sum_{i\in J}n_i\alpha_i\), where \(J\subset I\).

Given a surface with boundary, an \emph{arc} is the homotopy class (relative to the boundary) of a simple arc that cannot be homotoped into the boundary. We denote by \(\C(S)\) the \emph{curve and arc graph} of a surface \(S\), where vertices are curves and, if \(\partial S\neq \emptyset\), arcs, and two vertices are adjacent if they have disjoint representatives.

For any two subsurfaces $A$ and $B$ of $S$ that have an essential intersection, the \emph{subsurface projection} of $B$ to $A$ is the subset $\partial B \cap A \subset \C(A)$. This projection is denoted by $\pi_{A}(B)$. For any $\beta \in \C(B)$ we also define $\pi_{A}(\beta) := \pi_{A}(B)$. These projections always have bounded diameter \cite{mm_geometryII} and given any intersecting subsurfaces $A,B,C \subset S$ we define the \emph{projection distance} as:
\begin{align*}
	d_{A}(B,C) := \operatorname{diam}_{\C(A)}(\pi_{A}(B) \cup \pi_{A}(C)).
\end{align*}

For a subgroup $G < \mcg(S)$, we say that a subsurface, $K \subset S$, is \emph{$G$-nondisplaceable} if $gK$ and $K$ cannot be homotoped to be disjoint for any $g \in G$. Note that if a subsurface $K$ is $G$-nondisplaceable, subsurface projections are always defined between $G$-translates of $K$.

\section{Proof of Theorem \ref{thm:main}}

Fix an infinite-type surface \(S\), different from the Loch Ness monster.

Figure \ref{fig:nondsubsurfaces} shows some examples of $\ccmcg{S}$-nondisplaceable subsurfaces that will be used in the following lemma. 

\begin{lemma}\label{lem:nondisp}
If \(S\) is an infinite-type surface, different from the Loch Ness monster, it contains infinitely many finite-type \(\ccmcg{S}\)-nondisplaceable subsurfaces, which are pairwise disjoint, pairwise homeomorphic and non-accumulating. Moreover, the subsurfaces can be chosen to be nonexceptional.
\end{lemma}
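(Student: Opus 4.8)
The starting observation would be that, because $S$ is infinite type and is not the Loch Ness monster, its space of ends $\Ends(S)$ has at least two points: a one-ended infinite-type surface necessarily has infinite genus accumulating at its unique end, which is precisely the Loch Ness monster. I would then organize the argument around the two ways this can happen --- $S$ has infinite genus (and hence at least two ends), or $S$ has finite genus and therefore infinitely many ends --- since the nondisplaceable subsurfaces are anchored by slightly different data in the two cases. The guiding principle throughout is that every $g \in \ccmcg{S}$ fixes each end of $S$ individually and cannot realize a handle shift; consequently a finite-type subsurface $K$ should be nondisplaceable as soon as sliding $gK$ off $K$ would force $g$ to transport a distinguished end (or a block of genus) from one side of $K$ to the other, which an end-fixing element cannot do.

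For the construction I would fix two distinct ends $e_-\neq e_+$ and choose a proper family of pairwise disjoint separating curves $\{c_n\}_{n\in\N}$ escaping towards these ends --- towards an accumulation point of $\Ends(S)$ in the finite-genus case, and towards the genus in the infinite-genus case. Cutting along the $c_n$ exhibits $S$ as a union of ``bands'', and inside the $n$-th band I would place a subsurface $K_n$ of a single fixed nonexceptional homeomorphism type --- say genus one with two boundary components --- arranged so that $K_n$ still separates an end of a prescribed topological type from the remaining ends. By the classification of infinite-type surfaces, once at least two ends (or infinite genus) are available this can be done so that the $K_n$ are pairwise disjoint, pairwise homeomorphic, nonexceptional, and escape every compact set, hence are non-accumulating.

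It remains to verify that each $K_n$ is $\ccmcg{S}$-nondisplaceable, and this is where I expect the real work to be. Fix $n$ and suppose $g\in\ccmcg{S}$ has $gK_n$ disjoint from $K_n$; then $gK_n$ lies in a single complementary component of $K_n$. Because $g$ fixes every end, $gK_n$ induces the same partition of $\Ends(S)$ as $K_n$; tracing this through forces $g$ to restrict to a proper self-embedding of the complementary component of $K_n$ carrying the distinguished end, with that end's characteristic data --- its Cantor--Bendixson type, or a block of genus --- left stranded on the wrong side of $K_n$, which is impossible for an end-preserving homeomorphism. The main obstacle is to make this self-embedding obstruction precise and uniform across all infinite-type $S\neq$ Loch Ness: one must isolate exactly which end-and-genus invariant is preserved by $\ccmcg{S}$ and check that it genuinely blocks the slide. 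The exclusion of the Loch Ness monster is essential precisely here --- its unique end is self-similar and admits self-embeddings swallowing any compact piece, so no finite-type subsurface of it is nondisplaceable, whereas for every other infinite-type surface the asymmetry between the two ends obstructs the displacement.
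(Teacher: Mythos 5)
Your overall strategy---anchoring finite-type subsurfaces with separating boundary curves to end and genus data, and arguing that an end-fixing mapping class cannot slide them off themselves---is the same as the paper's, and your construction of infinitely many disjoint, pairwise homeomorphic, non-accumulating copies is unproblematic once a correct model subsurface is fixed. The problem is the step you yourself flag as ``the real work'': it is genuinely missing, and the mechanism you propose for the contradiction is wrong for an important family of surfaces. You conclude that the displacement is ``impossible for an end-preserving homeomorphism,'' with the obstruction being ``the asymmetry between the two ends.'' This fails, for instance, for the ladder surface (infinite genus, exactly two nonplanar ends, no planar ends), and more generally whenever a boundary curve of \(K_n\) has infinite genus and indistinguishable end data on both sides: a handle shift is an end-preserving homeomorphism, it displaces every such separating curve, and there is no asymmetry of ends to exploit. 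In these cases nondisplaceability is not a property of \(\pmcg(S)\) at all---it is special to \(\ccmcg{S}\), and the paper obtains it by invoking the argument of \cite[Proposition 6.3]{pv_algebraic}, a homological ``genus flux'' computation showing that elements of \(\ccmcg{S}\) cannot move a separating curve with infinite genus on both sides off itself. You state ``cannot realize a handle shift'' as a guiding principle, but the contradiction you actually derive never uses it; supplying that input is precisely the content of the key case.

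A secondary issue is that a single fixed model such as ``genus one with two boundary components'' cannot work uniformly. If \(S\) is planar there are no positive-genus subsurfaces, and with only two separating boundary curves the pure end-partition argument has no traction when both complementary end sets look alike. The paper's proof is a case analysis exactly because the model must change: genus-\(g\) pieces with two separating boundary curves when the genus-counting or end-isolation argument applies, genus-\(g\) pieces with three separating boundary curves when a third nonempty block of ends is needed to make the partition of \(\Ends(S)\) yield a contradiction, and \(n\)-holed spheres with \(n\geq 5\) separating boundary curves in the planar case (the larger \(n\) also guaranteeing nonexceptionality, which your genus-one-two-boundary model only barely meets). As written, the proposal does not close the central nondisplaceability claim in the symmetric infinite-genus cases.
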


\begin{proof}
The proof is a case-by-case analysis. In each case we describe a finite-type \(\ccmcg{S}\)-nondisplaceable subsurface such that we can clearly find infinitely many copies with the required properties.

\underline{Case 1:} \(S\) is the once-punctured Loch Ness monster. Then note that any separating curve \(\alpha\) which separates the two ends cannot be mapped disjointly from itself by any mapping class (because it bounds a nondisplaceable subsurface). As a consequence, for any \(g\geq 1\), any genus-\(g\) subsurface with two boundary components separating the two ends is nondisplaceable.

\underline{Case 2:} \(S\) has at least two nonplanar ends. Note that by the argument in \cite[Proposition 6.3]{pv_algebraic}, any separating curve such that both complementary components have infinite genus is \(\ccmcg{S}\)-nondisplaceable.
\begin{itemize}
\item If \(S\) has at least one nonplanar end --- denoted \(e\) --- which is isolated in \(\Ends(S)\), for any \(g\geq 1\), any genus-\(g\) subsurface with two separating boundary components, each of which cuts off a surface containing only the end \(e\), is \(\ccmcg{S}\)-nondisplaceable.
\item If \(S\) has at least one nonplanar end --- denoted \(e\) --- which is isolated in \(\Ends_g(S)\) but not in \(\Ends(S)\), for any \(g\geq 1\), any genus-\(g\) subsurface with three separating boundary components, two of which cut off a subsurface whose only nonplanar end is \(e\) and the third one cuts off a planar surface, is \(\ccmcg{S}\)-nondisplaceable.
\item If no nonplanar end is isolated in \(\Ends_g(S)\), \(\Ends_g(S)\) is a Cantor set. If it contains an end \(e\) that is not accumulated by planar ends, we choose a genus-\(g\) subsurface with two separating boundary components and no planar ends, so that each complementary component has infinite genus, for \(g\geq 1\). Otherwise, we choose a genus-\(g\) subsurface with three separating boundary components, so that two complementary components have infinite genus and one is a planar subsurface, for \(g\geq 1\).
\end{itemize}

\underline{Case 3:} \(S\) has no nonplanar ends. We can then choose any \(n\)-holed sphere whose boundary curves are separating, so that there is at least one end in each complementary component, for \(n\geq 5\).
\end{proof}

\begin{figure}[h]
\begin{center}
\includegraphics[width=.8\textwidth]{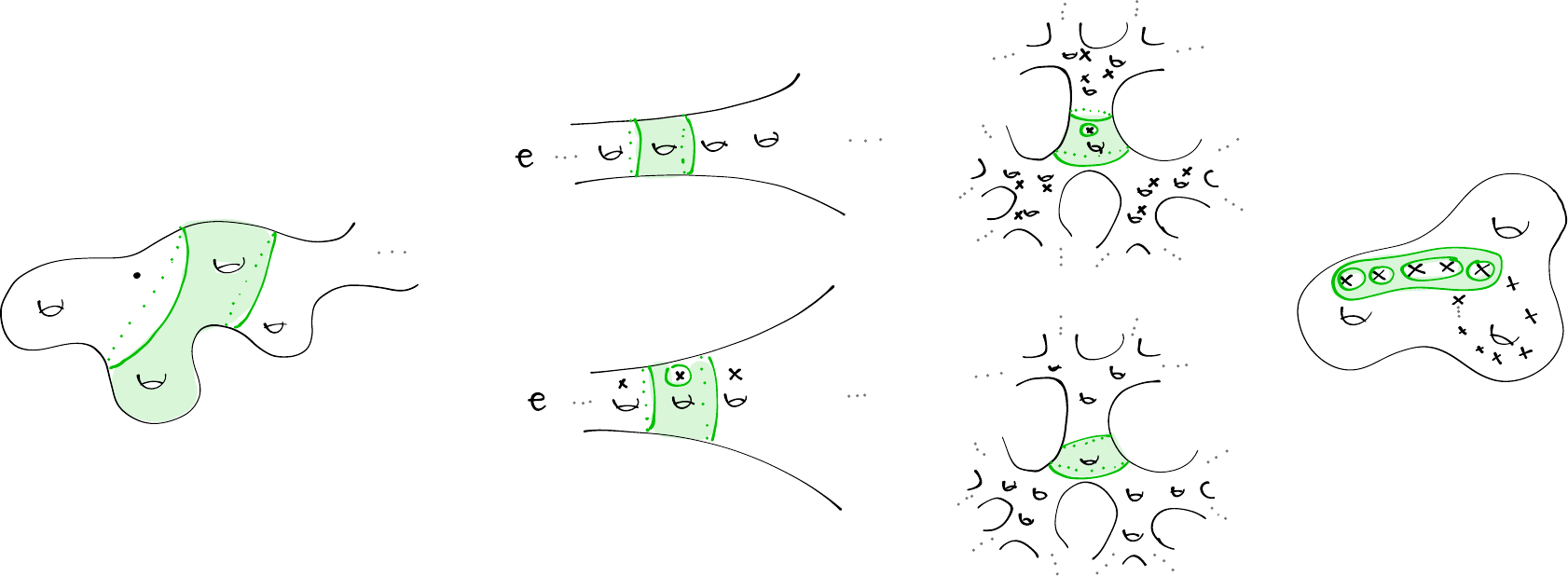}
\caption{The subsurfaces of Lemma \ref{lem:nondisp}}
\label{fig:nondsubsurfaces}
\end{center}
\end{figure}

Fix a finite-type \(\ccmcg{S}\)--nondisplaceable subsurface \(\Sigma\subset S\) and let \(\Y\) be the \(\ccmcg{S}\)-orbit of \(\Sigma\). As \(\Sigma\) is \(\ccmcg{S}\)-nondisplaceable, any two surfaces in \(\Y\) have intersecting boundaries --- in particular, subsurface projections \(\pi_A\) between surfaces in \(\Y\) are always defined. Moreover, by \cite{behrstock_asymptotic} and \cite{mm_geometryII}, there is some constant \(\mu>0\) so that for every \(A,B,C\in\Y\):
\begin{itemize}
\item at most one of \(d_A(B,C)\), \(d_B(A,C)\) and \(d_C(A,B)\) is bigger than \(\mu\), and
\item \(|\{D\in\Y\st d_D(A,B)>\mu\}|\) is finite.
\end{itemize}
See \cite[Lemma 3.8]{domat_big} for details on checking these in the infinite-type case.
We can therefore run the projection complex machinery to deduce (see \cite[Proposition 2.7]{bbf_stable}):

\begin{prop}
\(\ccmcg{S}\) acts by isometries on a hyperbolic graph \(\C(\Y)\) so that for every \(A,B\in\Y\), \(A\neq B\):
\begin{enumerate}
\item \(\C(A)\) is isometrically embedded as a convex set in \(\C(\Y)\) and the images of \(\C(A)\) and \(\C(B)\) are disjoint;
\item the inclusion
\[\bigsqcup_{C\in\Y}\C(Y)\hookrightarrow \C(\Y)\]
is \(\ccmcg{S}\)-equivariant;
\item the nearest point projection to \(\C(A)\) sends \(\C(B)\) to a bounded set, which is at uniformly bounded distance from \(\pi_A(B)\);
\item if \(g\in \ccmcg{S}\) is supported on \(A\) and the restriction is pseudo-Anosov, and \(\Gamma\) is the subgroup of \(\ccmcg{S}\) given by elements leaving \(A\) invariant and preserving the stable and unstable foliations of \(g\), then \((\ccmcg{S},\C(\Y),g,\Gamma)\) satisfies WWPD.
\end{enumerate}
\end{prop}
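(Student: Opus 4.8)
The plan is to recognize the data assembled just above the statement --- the family of curve-and-arc graphs \(\{\C(A)\}_{A\in\Y}\) together with the subsurface projections \(\pi_A(B)\) and the bound \(\mu\) --- as precisely the input to the projection complex construction of Bestvina, Bromberg and Fujiwara. Indeed, the two displayed bullet points say that the projection distances \(d_A(B,C)\) satisfy a Behrstock-type inequality (at most one of the three mutual projections is large) and a local finiteness condition (only finitely many \(D\in\Y\) see a fixed pair as far apart). Together with the fact, recorded above, that each \(\pi_A(B)\) is a nonempty subset of \(\C(A)\) of uniformly bounded diameter, these are exactly the projection axioms of \cite{bbf_stable}. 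So the first step is simply to check that our constant \(\mu\) plays the role of the BBF threshold, which is immediate.

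Next I would feed this data into the machinery. For a suitable parameter one first forms the projection complex on the index set \(\Y\), a quasi-tree, and then blows up each vertex \(A\) into the metric space \(\C(A)\), gluing the pieces along the projection data to obtain the total space \(\C(\Y)\). Since all subsurfaces in \(\Y\) lie in a single \(\ccmcg{S}\)-orbit they are pairwise homeomorphic, so the graphs \(\C(A)\) are all isometric and in particular \emph{uniformly} hyperbolic (each is the curve-and-arc graph of one fixed finite-type surface). Uniform hyperbolicity of the pieces is exactly the hypothesis of the BBF hyperbolicity criterion, so \(\C(\Y)\) is a hyperbolic graph.

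Properties (1)--(3) are then read off from the standard output of the construction: each \(\C(A)\) embeds isometrically as a convex subset, the images for distinct \(A,B\) are disjoint, and nearest-point projection onto \(\C(A)\) coarsely agrees with \(\pi_A\). For the equivariance in (2) I would use naturality: a mapping class \(g\in\ccmcg{S}\) carries \(A\) to \(gA\) and intertwines \(\pi_A\) with \(\pi_{gA}\) (subsurface projection commutes with homeomorphisms), so the blow-up can be carried out \(\ccmcg{S}\)-equivariantly and the inclusion \(\bigsqcup_{C\in\Y}\C(C)\hookrightarrow\C(\Y)\) respects the action.

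The crux is property (4), the WWPD condition, which is the whole point of building \(\C(\Y)\). Here I would invoke \cite[Proposition 2.7]{bbf_stable} directly: if \(g\) is supported on \(A\) and restricts to a pseudo-Anosov, then \(g\) acts loxodromically on the convex copy of \(\C(A)\), hence on \(\C(\Y)\), with that copy serving as a neighborhood of its axis. One then verifies the WWPD quadruple \((\ccmcg{S},\C(\Y),g,\Gamma)\) using the local finiteness axiom together with the description of \(\Gamma\) as the subgroup leaving \(A\) invariant and preserving the foliations of \(g\). The main obstacle I anticipate is not the hyperbolicity --- essentially automatic once the axioms are in place --- but making the WWPD verification go through cleanly in the infinite-type setting, i.e.\ confirming that the transversality and stabilizer statements for \(g\) and its axis behave as in the finite-type BBF framework; this is exactly what \cite[Proposition 2.7]{bbf_stable}, fed by the local finiteness established above, is designed to guarantee.
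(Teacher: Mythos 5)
Your proposal matches the paper's approach exactly: the paper verifies the projection axioms (via the two bullet points, citing Behrstock, Masur--Minsky, and \cite[Lemma 3.8]{domat_big} for the infinite-type details) and then deduces the proposition directly from \cite[Proposition 2.7]{bbf_stable}, which is precisely the route you take. The additional detail you supply about reading off properties (1)--(4) from the blow-up construction is consistent with what that cited result provides.
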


Furthermore, the same proof as \cite[Lemma 2.8]{bbf_stable} yields:

\begin{lemma}\label{lem:finitemtwists}
Let \(\tau\) be a multitwist about a finite multicurve \(\mu\). Then for every \(A\in\Y\) there is a vertex \(v_A\) of \(\C(\Y)\) such that the nearest point projection to \(\C(A)\) of the \(\tau\) orbit of \(v_A\) is uniformly bounded. In particular, if \(\tau\) is hyperbolic, its virtual quasi-axis can intersect \(\C(A)\) only in a bounded length segment.
\end{lemma}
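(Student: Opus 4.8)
The plan is to split according to whether the multitwist $\tau$ preserves the subsurface $A$ or genuinely moves it, since these produce two different mechanisms of boundedness, and in each case to exhibit a convenient choice of $v_A$. Throughout I would use the preceding Proposition: that each $\C(A)$ sits in $\C(\Y)$ as a convex, isometrically embedded set (item (1)), that the inclusions are $\ccmcg{S}$-equivariant (item (2)), and crucially that the nearest point projection to $\C(A)$ agrees coarsely with the subsurface projection $\pi_A$ (item (3)). Since $A$ and every $\ccmcg{S}$-translate of $A$ have essentially intersecting boundaries, all of these projections are defined, so I may freely pass between nearest point projection and $\pi_A$.

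First I would treat the case $\tau(A)=A$. Because the components of $\mu$ are pairwise disjoint, $\tau$ fixes each of them, and the restriction $\tau|_A$ is precisely the multitwist on the finite-type surface $A$ about those components of $\mu$ contained in $A$ (the components disjoint from $A$, or isotopic into $\partial A$, act trivially on $\C(A)$). A multitwist is reducible, so by Masur--Minsky \cite{mm_geometryII} it acts on the curve-and-arc graph $\C(A)$ with bounded orbits, pseudo-Anosovs being the only classes that act with unbounded orbits. Choosing any vertex $v_A$ of $\C(A)$, the orbit $\{\tau^N v_A\}$ is then a bounded subset of $\C(A)$; since $\C(A)$ is isometrically embedded, its nearest point projection to $\C(A)$ is itself, hence bounded.

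Next I would handle the case $\tau(A)\neq A$. Here some component $\alpha_0$ of $\mu$ must cross $\partial A$ essentially, for otherwise every component would be isotopic into $A$, into its complement, or into $\partial A$, and $\tau$ would preserve $A$. Again choosing any vertex $v_A$ of $\C(A)$, equivariance gives $\tau^N v_A\in\C(\tau^N A)$, and by item (3) its nearest point projection to $\C(A)$ lies uniformly close to $\pi_A(\tau^N A)=\pi_A(\tau^N\partial A)$. The key point is that, since $\tau$ fixes the component $\alpha_0$, the geometric intersection number $i(\tau^N\partial A,\alpha_0)=i(\partial A,\alpha_0)$ is independent of $N$. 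As $\alpha_0$ and every $\tau^N\partial A$ cross $A$ while meeting in a uniformly bounded number of points, the standard bound controlling $\operatorname{diam}_{\C(A)}\big(\pi_A(\gamma)\cup\pi_A(\delta)\big)$ by a function of $i(\gamma,\delta)$ forces all the sets $\pi_A(\tau^N\partial A)$ to lie within bounded distance of the fixed set $\pi_A(\alpha_0)$. Their union therefore has bounded diameter, which bounds the nearest point projection of the entire orbit. Finally, for the ``in particular'' clause: if $\tau$ is hyperbolic then the orbit $\{\tau^N v_A\}$ is a quasi-axis, and a quasigeodesic whose nearest point projection to a convex isometrically embedded set is bounded can meet that set only in a segment of bounded length by standard $\delta$-hyperbolic geometry.

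The genuine content, and the one place a case split is unavoidable, is precisely that when $\tau(A)=A$ the expression $\pi_A(\tau^N A)$ is meaningless (a subsurface does not project to itself), so the boundedness there cannot come from the intersection-number estimate and must instead be extracted from reducibility of $\tau|_A$. The main technical inputs I would have to state carefully are item (3) of the Proposition, used to replace nearest point projection by $\pi_A$, and the elementary bound of subsurface projection distance in terms of intersection number; neither is hard, so I expect the only real subtlety to be bookkeeping the three possible positions (inside, outside, crossing) of the components of $\mu$ relative to $A$.
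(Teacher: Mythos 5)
Your argument is correct in substance and arrives at the same conclusion, but it is organized differently from the paper's proof. The paper does not case-split on whether \(\tau\) preserves \(A\); instead it splits on whether \(\mu\cap A\) is empty (in which case \(\tau\) acts trivially on \(\C(A)\)) and, when it is not, makes the single clever choice \(v_A\in\mu\cap A\). Because \(\tau\) preserves \(\mu\) componentwise, the projection of \(\tau^n(v_A)\) then lands uniformly close to the fixed set \(\mu\cap A\) whether or not \(\tau\) moves \(A\), so no dichotomy is needed. Your ``moving'' case runs on exactly the same engine as the paper's (invariance of \(\mu\) under \(\tau\) plus the intersection-number bound on subsurface projection distances, with \(\alpha_0\) playing the role of \(\mu\)), whereas your ``preserved'' case replaces the paper's one-line observation by the Masur--Minsky classification of elements acting with unbounded orbits on \(\C(A)\) --- a heavier tool than necessary, though a valid one. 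Two loose ends in your version: (i) in Case 1 you assert that \(\tau|_A\) is ``precisely the multitwist about the components of \(\mu\) contained in \(A\),'' which tacitly assumes that \(\tau(A)=A\) forces no weighted component of \(\mu\) to cross \(\partial A\); this is true (e.g.\ via annular projections, since \(d_{\alpha_i}(\partial A,\tau^N\partial A)\) grows linearly in \(N\) for any crossing component with nonzero weight) but needs a word --- or can be sidestepped entirely by noting that \(\tau|_A\) preserves the finite arc-and-curve system \(\mu\cap A\), hence is reducible or trivial in any case; (ii) in Case 2 you implicitly use that \(\pi_A(\tau^N A)\) is defined for every \(N\neq 0\), i.e.\ that \(\tau A\neq A\) forces \(\tau^N A\neq A\) for all \(N\neq 0\), which again follows from the annular projection estimate but is not addressed. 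The paper's choice of basepoint in \(\mu\cap A\) is precisely what makes both of these issues evaporate, which is the main thing your route gives up in exchange for allowing an arbitrary \(v_A\).
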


\begin{proof}
If \(\mu\cap A=\emptyset\), \(\tau\) fixes any element of \(\C(A)\), so it's elliptic. Otherwise, let \(v_A\) be an element of \(A\cap\mu\neq \emptyset\).
Then the nearest point projection of $\tau^{n}(v_{A})$ to $\C(A)$ is a uniformly bounded distance from \(\pi_A(\tau^n(v_A))\), which is defined to be \(\pi_A(\partial(\tau^n(A))\).  But this is at bounded distance from \(\tau^n(\mu)\cap A=\mu\cap A\), so the projection of \(\tau^n(v_A)\) is at uniformly bounded distance from \(A\cap \mu\) for every \(n\). This proves the first statement of the lemma. The second statement follows as in the proof of \cite[Lemma 2.8]{bbf_stable}.
\end{proof}

As a consequence, we can apply \cite[Proposition 3.1]{bbf_stable} to deduce:

\begin{prop}\label{prop:quasimorphism}
Let \(\Sigma\) be a \(\ccmcg{S}\)-nondisplaceable subsurface of finite type and  \(f\) a mapping class that is a pure chiral pseudo-Anosov mapping class of \(\Sigma\) of sufficiently large translation length and the identity on the complement. Then there is a homogeneous quasimorphism \(\varphi:\ccmcg{S}\to\R\) of defect \(\Delta\) such that \(|\varphi(f^n)|\to \infty\) and \(|\varphi(\tau)|\leq \Delta\) for every multitwist \(\tau\).
\end{prop}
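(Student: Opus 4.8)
The plan is to feed the WWPD element \(f\) into the projection-complex quasimorphism construction of Bestvina--Bromberg--Fujiwara and then certify that the resulting quasimorphism is bounded on \emph{all} multitwists, not just the finite ones. Concretely, part (4) of the Proposition shows that \((\ccmcg{S},\C(\Y),f,\Gamma)\) satisfies WWPD, and since \(f\) restricts to a pseudo-Anosov on \(\Sigma\) it acts on \(\C(\Y)\) as a hyperbolic isometry whose axis lies in the isometrically and convexly embedded copy \(\C(\Sigma)\); taking the translation length large makes this axis carry an arbitrarily long oriented segment \(w\). Applying \cite[Proposition 3.1]{bbf_stable} to this data produces a homogeneous quasimorphism \(\varphi\colon\ccmcg{S}\to\R\) of some defect \(\Delta\), together with the estimate that \(|\varphi(h)|\le\Delta\) whenever the quasi-axis of \(h\) fails to fellow-travel a translate of \(w\) for the full length of \(w\). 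The chirality of \(f\) is exactly what prevents the copies of \(w\) and of \(\bar w\) along its axis from cancelling, so that \(\varphi(f)\neq 0\) and hence \(|\varphi(f^n)|=|n|\,|\varphi(f)|\to\infty\).

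It then remains to verify the ``no long overlap with \(w\)'' hypothesis for every multitwist. For a \emph{finite} multitwist this is precisely Lemma~\ref{lem:finitemtwists}, so I would re-run the proof of that lemma for an arbitrary multitwist \(\tau=\tau_\mu\), the only point being to check that it never used finiteness of \(\mu\). Fix \(A\in\Y\) and a basepoint. Because \(A\) has finite type and the curves of \(\mu\) do not accumulate, only finitely many of them meet \(A\), so \(\mu\cap A\) is a finite collection of pairwise disjoint arcs and curves and therefore spans a clique in \(\C(A)\); in particular \(\mu\cap A\) has diameter at most \(1\). Since each twist appearing in \(\tau\) fixes every curve of \(\mu\), we have \(\tau(\mu)=\mu\), whence \(\pi_A(\tau^n(v_A))\) stays a uniformly bounded distance from \(\tau^n(\mu)\cap A=\mu\cap A\) for all \(n\). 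Thus the \(\tau\)-orbit projects into a set of universally bounded diameter in every \(\C(A)\), exactly as in Lemma~\ref{lem:finitemtwists}, and the quasi-axis of \(\tau\) meets each \(\C(A)\) in a segment of universally bounded length.

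The crucial feature that makes a single quasimorphism work against the whole collection of multitwists is that the bound \(B\) on the projected orbit is \emph{universal}: it depends only on the Masur--Minsky projection constant, and not on \(\tau\) or on \(n\), since a family of disjoint arcs and curves always has diameter \(\le 1\) in the relevant curve-and-arc graph. Having fixed \(f\) with translation length so large that \(|w|>B\) (plus the fellow-travelling and hyperbolicity constants supplied by \cite{bbf_stable}), no path of the form \([x_0,\tau^n x_0]\) can contain a copy of \(w\), since such a copy would force progress of more than \(B\) inside some single \(\C(A)\), contradicting \(d_A(x_0,\tau^n x_0)\le B\). Hence \(\varphi\) is bounded along every multitwist orbit, which forces \(|\varphi(\tau)|\le\Delta\) for every multitwist \(\tau\).

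I expect the main obstacle to be precisely this passage from finite to infinite multicurves: one must be sure that enlarging \(\mu\) to an infinite, merely non-accumulating family does not let the orbit drift inside some \(\C(A)\), and that the controlling constant can be taken independent of \(\tau\), so that one fixed (large) translation length of \(f\) defeats all multitwists at once. Everything else is a direct application of the machinery already set up, with chirality handling the non-vanishing on \(f\) and the WWPD property handling the existence of \(\varphi\).
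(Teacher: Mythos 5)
Your proof is correct and rests on the same machinery as the paper's (the projection complex, WWPD for \(f\), and \cite[Proposition 3.1]{bbf_stable}), but you handle the only nontrivial point --- the passage from finite to arbitrary multitwists --- by a different reduction. You re-run the proof of Lemma~\ref{lem:finitemtwists} for an infinite non-accumulating multicurve \(\mu\), observing that for each fixed \(A\in\Y\) only finitely many components of \(\mu\) meet \(A\) (finitely many can be isotoped into the finite-type surface \(A\), and only finitely many of the disjoint non-accumulating representatives can cross the compact set \(\partial A\)), so that \(\mu\cap A\) is a clique in \(\C(A)\) and the Behrstock-type projection bound is universal in \(A\), \(\tau\) and \(n\). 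The paper instead keeps Lemma~\ref{lem:finitemtwists} strictly for finite multicurves and factors an arbitrary multitwist as \(\tau=\tau_1\tau_2\), where \(\tau_1\) twists about the (automatically finitely many) curves of \(\mu\) meeting the fixed subsurface \(\Sigma\) and \(\tau_2\) about the rest; then \(\tau_2\) fixes \(\C(\Sigma)\) pointwise, hence acts elliptically on \(\C(\Y)\) and is killed by the homogeneous quasimorphism, \(\tau_1\) is covered by the finite lemma, and the quasimorphism inequality gives \(|\varphi(\tau)|\le|\varphi(\tau_1)|+|\varphi(\tau_2)|+\Delta=\Delta\). Both routes are valid: yours proves the slightly stronger statement that the conclusion of Lemma~\ref{lem:finitemtwists} holds for all multitwists, at the cost of the local-finiteness check you correctly identify as the crux, while the paper's decomposition avoids that check entirely and quotes the finite lemma verbatim.
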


\begin{proof}
The only thing we need to check is that \(|\varphi(\tau)|\leq \Delta\) for every multitwist \(\tau\). But a multitwist \(\tau\) associated to an integral weighted multicurve \(\mu=\sum_{i\in I}n_i\alpha_i\) can be written as a product of two multitwists, \(\tau_1\) and \(\tau_2\), where \(\tau_1\) is associated to the integral weighted multicurve
\[\mu_1=\sum_{i:\alpha_i\cap\Sigma\neq \emptyset}n_i\alpha_i\]
and \(\tau_2\) to the integral weighted multicurve
\[\mu_2=\sum_{i:\alpha_i\cap\Sigma= \emptyset}n_i\alpha_i.\]
Then \(\tau_2\) acts elliptically on \(\C(\Y)\), for \(\Y=\ccmcg{S}\cdot\Sigma\), so \(\varphi(\tau_2)=0\). By Lemma \ref{lem:finitemtwists}, if \(\tau_1\) doesn't act elliptically on \(\C(\Y)\), its virtual quasi-axis has small projections. Thus, by how the quasimorphism $\phi$ is constructed in \cite[Proposition 3.1]{bbf_stable}, if the translation length of $f$ is larger than the projection bound from Lemma \ref{lem:finitemtwists}, we have \(\varphi(\tau_1)=0\). As a consequence \(|\varphi(\tau)|\leq \Delta\). 
\end{proof}

\begin{proof}[Proof of Theorem \ref{thm:main}]	
	 Suppose first that \(S\) is not the Loch Ness monster. We will  construct an element $F$ of $\Torelli(S)$ which is not a finite product of multitwists. Since $F$ is in $\overline{\mcg_c(S)}$ (by construction, or by the fact that $\Torelli(S)\subset \overline{\mcg_c(S)}$ by \cite{agkmtw_big}), this will also show that multitwists don't generate the closure of the compactly supported mapping class group. By Lemma \ref{lem:nondisp}, we can find pairwise disjoint and nonaccumulating finite-type subsurfaces \(\Sigma_n\), all pairwise homeomorphic and \(\ccmcg{S}\)-nondisplaceable. Fix \(\Sigma\) a surface homeomorphic to the \(\Sigma_n\) and \(\theta_n:\Sigma\to\Sigma_n\) a homeomorphism. Choose a pure chiral pseudo-Anosov mapping class \(f\) in the Torelli group of \(\Sigma\), and let \(F_n\) be the mapping class of \(S\) with support on \(\Sigma_n\) and so that \(F_n|_{\Sigma_n}=\theta_n\circ f\circ\theta_n^{-1}\). Then for any \(n\), by Proposition \ref{prop:quasimorphism} (after potentially passing to a power in order to increase the translation length), we can find a homogeneous quasimorphism \(\varphi_n:\ccmcg{S}\to \R\) with defect \(\Delta\) (independent of \(n\)) that is unbounded on powers of \(F_n\) and bounded by $\Delta$ on all multitwists or elements acting elliptically on \(\C(\ccmcg{S}\cdot\Sigma_n)\). Choose powers \(k_n\) so that
\[|\varphi_n(F_n^{k_n})|\to\infty,\]
which exist because by assumption \(|\varphi_n(F_n)|>1\) for every \(n\). Define
\[F=\prod_{n\in\N}F_n^{k_n}.\]
For every \(n\), \(\prod_{m\neq n}F_m^{k_m}\) acts elliptically on \(\C(\ccmcg{S}\cdot\Sigma_n)\), so 
\[|\varphi_n(F)|=\left|\varphi_n\left(F_n^{k_n}\circ \prod_{m\neq n}F_m^{k_m}\right)\right|\geq \left|\varphi_n\left(F_n^{k_n}\right)\right|-\Delta\to\infty.\]
	 
If \(F\) were a product of \(k\) multitwists \(\tau_1,\dots,\tau_k\), then for any \(n\), \[|\varphi_n(F)|=|\varphi_n(\tau_k\circ\dots\circ\tau_1)|\leq 2k\Delta,\] which gives a contradiction.

Suppose now that \(S\) is the Loch Ness monster and fix a point \(x\in S\). By the Birman exact sequence \cite[Appendix]{domat_big}, the kernel of the surjection \[\Torelli(S\ssm \{x\})\to\Torelli(S)\] is the fundamental group of $S$ and is therefore generated by twists. In particular, if \(\Torelli(S)\) is contained in the subgroup generated by multitwists, so is the Torelli group of the once-punctured Loch Ness monster, a contradiction. By \cite{pv_algebraic}, $\mcg(S)=\overline{\mcg_c(S)}$ and $\mcg(S\ssm\{x\})=\overline{\mcg_c(S\ssm\{x\})}$, so the same argument applied to $\mcg(S)$ and $\mcg(S\ssm\{x\})$ proves the result for the closure of the compactly supported mapping class group.

\end{proof}

\bibliographystyle{alpha}
\bibliography{references}

\end{document}